\documentclass[11pt]{amsart}

\usepackage{amsfonts}
\usepackage{amssymb}
\usepackage{amsthm}
\usepackage{amsmath}
\usepackage{hyperref}
\usepackage[top=3.5cm, bottom=3.5cm, left=2.5cm, right=2.5cm]{geometry}
\usepackage[capitalise]{cleveref}
\usepackage{tikz}
\usepackage{pgf}
\usepackage{graphicx}

\usepackage[latin1]{inputenc}

\newtheorem{prop}{Proposition}
\newtheorem{theorem}[prop]{Theorem}
\newtheorem{lemma}[prop]{Lemma}

\theoremstyle{definition}
\newtheorem{definition}[prop]{Definition}

\theoremstyle{remark}

\newtheorem*{remark*}{Remark}

\newcommand{\N}{\mathbb{N}}
\newcommand{\Z}{\mathbb{Z}}

\newcommand{\E}{\mathbb{E}}
\newcommand{\K}{\mathbb{K}}
\newcommand{\F}{\mathbb{F}}

\newcommand{\eps}{\varepsilon}


\title{A brief introduction to approximate groups}
\date{}
\author{Matthew C. H. Tointon}
\address{Pembroke College, University of Cambridge, CB2 1RF, United Kingdom}
\email{mcht2@cam.ac.uk}
\thanks{The author is the Stokes Research Fellow at Pembroke College, University of Cambridge.}

\begin{document}
\maketitle
\begin{abstract} We give a brief introduction to the notion of an \emph{approximate group} and some of its numerous applications.
\end{abstract}

\tableofcontents

This is a translation of the author's article `Raconte-moi\ldots les groupes approximatifs', which appeared in the \textit{Gazette des math\'ematiciens} in April 2019 \cite{raconte-moi}.

\section{Approximately closed sets}
Mathematicians are used to the notion of a subgroup of a group $G$ as a subset containing the identity that is closed under taking products and inverses. However, it turns out that there are also circumstances in which we encounter subsets that are merely `approximately closed'. Such sets arise in the study of \emph{polynomial growth} in geometric group theory (which in turn has links to isoperimetric inequalities and random walks) and in the construction of \emph{expander graphs} (which are important objects in computer science), but there are also numerous other examples.

A priori, there are several different ways to define approximate closure. One of these is the notion of a set of \emph{small doubling}, with which we commence our discussion; another is the notion of an \emph{approximate subgroup}, which we present in detail in Section \ref{sec:app.grp}. We shall see that these two notions are intimately linked.

We start by giving one interpretation of the phrase `approximately closed'. Given subsets $A,B$ of a group $G$, we set $AB=\{ab:a\in A,b\in B\}$ and $A^{-1}=\{a^{-1}:a\in A\}$. We also set $A^2=AA$, $A^3=AAA$, and so on. For additive abelian groups we write instead $A+B$, $-A$, $2A=A+A$, $3A=A+A+A$ and so on. To say that a finite subset $A$ is closed under the group operation is then to say that $A^2=A$. One property that could be interpreted as being an \emph{approximate} version of closure is thus that $A^2$ is not too much bigger than $A$ (we will discuss very briefly in Section \ref{sec:app.grp} a possible extension to infinite subsets).

Let us consider for a moment the extreme values that $|A^2|$ can take. It is clear that $|A^2|\ge|A|$, with equality when $A$ is a finite subgroup, for example. On the other hand, it is clear that
$|A^2|\le|A|^2$, with equality if $A=\{x_1,\ldots,x_r\}$ and $G$ is the free group on the generators $x_i$.

Although it is extremal, the case in which $|A^2|$ is comparable  to $|A|^2$ should not be thought of as atypical. Indeed, there is a fairly general phenomenon whereby if $A$ is a suitably defined random set of size $k$ inside some group then $\E[|A^2|]\ge ck^2$ for some constant $c$ depending on the context. For example, if $A$ is chosen uniformly from the interval $\{1,\ldots,n\}\subset(\Z,+)$ with $n$ much larger than $k$ then one can essentially take $c=\frac{1}{2}$ \cite[Proposition 2.1.1]{book}. This suggests that a condition of the type $|A^2|=o(|A|^2)$ is a stong constraint on the set $A$. We will consider this condition in its strongest form, in which
\begin{equation}\label{eq:sm.doub}
|A^2|\le K|A|
\end{equation}
for a given $K\ge1$.

\begin{definition}A set $A$ satisfying \eqref{eq:sm.doub} is called a \emph{set of doubling at most $K$}, or simply a \emph{set of small doubling}. The quantity $|A|^2/|A|$ is called the \emph{doubling constant} of $A$. Similarly, a set $A$ satisfying $|A^3|\le K|A|$ is called a \emph{set of tripling at most $K$}, or simply a \emph{set of small tripling}. The quantity $|A|^3/|A|$ is called the \emph{tripling constant} of $A$.
\end{definition}

Since the inequality \eqref{eq:sm.doub} is in some sense the opposite of what we would expect from a random set, it is reasonable to suppose that a set of small doubling should possess a certain amount of `structure'. One of the principal goals of the theory of approximate groups is to describe this structure. In this article we give a brief overview of this theory; for more details, and for a more complete bibliography, the reader can consult the author's book \cite{book}.

\bigskip

We will often assume that the set $A$ contains the identity and is \emph{symmetric}, which is to say closed under taking inverses. For the majority of the results we present this is not a necessary hypothesis, but it simplifies the exposition and the notation.

\section{First examples}\label{sec:exemples}
A trivial family of examples of sets of small doubling is given by small sets: if $|A|\le K$ then of course $A$ satisfies $|A^2|\le K|A|$. We will therefore focus on sets of size significantly larger than $K$. 
Finite subgroups also give easy examples of sets of small doubling. Note also that if a set $A_0$ has doubling constant at most $K$, and $A$ is a subset of $A_0$ of \emph{density} at most $\alpha\in[0,1]$ (which is to say that $|A|\ge\alpha|A_0|$), then we have
\[
|A^2|\le|A_0^2|\le K|A_0|\le\frac{K}{\alpha}|A|,
\]
and so the doubling constant of $A$ is at most $K/\alpha$. Thus, if $A$ is sufficiently dense in some set of small doubling $A_0$ then $A$ is also a set of small doubling. In particular, if $H$ is a finite subgroup of $G$ and the density in $H$ of some subset $A\subset H$ is at least $1/K$ then the doubling constant of $A$ is at most $K$. Freiman showed that for small enough $K$ this essentially exhausts all of the examples of sets of doubling $K$. More precisely, he showed that if $|A^2|<\frac{3}{2}|A|$ then $A^2$ is a coset of a finite subgroup (see \cite[Theorem 2.2.1]{book}).

We now consider a more interesting example. Note first of all that if $B\subset\Z^d$ is a `box' of the form
\[
B=\{x\in\Z^d:|x_i|\le L_i\text{ for $i=1,\ldots,d$}\}
\]
for some $L_i\in\N$ then
\begin{equation}\label{eq:sm.doub.B}
|B+B|\le2^d|B|
\end{equation}
regardless of the values taken by the $L_i$. Boxes  in $\Z^d$ are thus sets of small doubling. It is also easy to check that their homomorphic images are also sets of small doubling. To see this, first note that such a box $B$ satisfies a stronger property than \eqref{eq:sm.doub.B}, in that there exists a set $X$ satisfying $|X|=2^d$ such that
\begin{equation}\label{eq:cov.B}
B+B\subset B+X,
\end{equation}
as illustrated in the following diagram.
\vspace{1em}
\begin{center}
\begin{tikzpicture}

\draw (-2,-1.5) -- (2,-1.5) -- (2,1.5) -- (-2,1.5) -- cycle;
\draw (1.5,-1.6) node[below] {$B+B$};
\draw (0.7,-0.7) node[above] {$B$};

\draw (-1,-0.75) -- (1,-0.75) -- (1,0.75) -- (-1,0.75) -- cycle;

\begin{scope}[shift={(0,1.5)}]
\begin{scope}[shift={(2,0)}]
\draw[dashed] (-3.9,-2.9) -- (-2.1,-2.9) -- (-2.1,-1.6) -- (-3.9,-1.6) -- cycle;
\end{scope}

\begin{scope}[shift={(4,0)}]
\draw[dashed] (-3.9,-2.9) -- (-2.1,-2.9) -- (-2.1,-1.6) -- (-3.9,-1.6) -- cycle;
\end{scope}
\end{scope}

\begin{scope}[shift={(0,3)}]
\begin{scope}[shift={(2,0)}]
\draw[dashed] (-3.9,-2.9) -- (-2.1,-2.9) -- (-2.1,-1.6) -- (-3.9,-1.6) -- cycle;
\end{scope}

\begin{scope}[shift={(4,0)}]
\draw[dashed] (-3.9,-2.9) -- (-2.1,-2.9) -- (-2.1,-1.6) -- (-3.9,-1.6) -- cycle;
\end{scope}
\end{scope}

\end{tikzpicture}
\end{center}
This means that if $G$ is an abelian group and $\pi:\Z^d\to G$ is a homomorphism then $\pi(B)+\pi(B)\subset\pi(X)+\pi(B)$. In particular, $|\pi(B)+\pi(B)|\le2^d|\pi(B)|$, and so $\pi(B)$ has small doubling.

A homomorphic image of a box such as $B$ is called a \emph{progression}. More precisely, if $x_1,\ldots,x_d$ are elements of an abelian group and $L_1,\ldots,L_d\in\N$ then we set
\[
P=P(x;L)=\{\ell_1x_1+\cdots+\ell_dx_d:|\ell_i|\le L_i\}.
\]
We call $P$ a \emph{progression}, and we call $d$ the \emph{rank} or the \emph{dimension} of $P$. For example, in the following diagram we illustrate the progression $P(9,2;2,1)\subset\Z$, viewed as $\pi(\Z^2\cap([-2,2]\times[-1,1]))$ with $\pi:\Z^2\to\Z$ defined by $\pi(1,0)=9$ and $\pi(0,1)=2$.

\vspace{1em}
\begin{center}
\begin{tikzpicture}

\filldraw (-0.5,3) circle (2pt);
\filldraw (-0.25,3) circle (2pt);
\filldraw (0,3) circle (2pt);
\filldraw (0.25,3) circle (2pt);
\filldraw (0.5,3) circle (2pt);

\filldraw (-0.5,2.75) circle (2pt);
\filldraw (-0.25,2.75) circle (2pt);
\filldraw (0,2.75) circle (2pt);
\filldraw (0.25,2.75) circle (2pt);
\filldraw (0.5,2.75) circle (2pt);

\filldraw (-0.5,2.5) circle (2pt);
\filldraw (-0.25,2.5) circle (2pt);
\filldraw (0,2.5) circle (2pt);
\filldraw (0.25,2.5) circle (2pt);
\filldraw (0.5,2.5) circle (2pt);

\draw[->] (0,2) -- node[right] {$\pi$} (0,0.5);

\filldraw (-5,0) circle (2pt);
\filldraw (-4.5,0) circle (2pt);
\filldraw (-4,0) circle (2pt);
\draw (-4.65,-0.2) node[below] {$-18$};

\filldraw (-2.75,0) circle (2pt);
\filldraw (-2.25,0) circle (2pt);
\filldraw (-1.75,0) circle (2pt);
\draw (-2.3,-0.2) node[below] {$-9$};

\filldraw (-0.5,0) circle (2pt);
\filldraw (0,0) circle (2pt);
\filldraw (0.5,0) circle (2pt);
\draw (-0.6,-0.2) node[below] {$-2$};
\draw (0,-0.2) node[below] {$0$};
\draw (0.5,-0.2) node[below] {$2$};

\filldraw (1.75,0) circle (2pt);
\filldraw (2.25,0) circle (2pt);
\filldraw (2.75,0) circle (2pt);
\draw (2.25,-0.2) node[below] {$9$};

\filldraw (4,0) circle (2pt);
\filldraw (4.5,0) circle (2pt);
\filldraw (5,0) circle (2pt);
\draw (4.5,-0.2) node[below] {$18$};

\end{tikzpicture}
\end{center}
To explain the term \emph{progression}, note that if the rank of $P$ is $1$ then $P$ is an arithmetic progression.

We have just seen that subgroups, progressions of bounded rank, and their dense subsets are all examples of sets of small doubling. The following remarkable theorem, due to Freiman in the case $G=\Z$ and Green and Ruzsa in the general case, shows that these are essentially the only examples in an abelian group.

\begin{theorem}[Green--Ruzsa]\label{thm:fgr}
Let $G$ be an abelian group, and suppose that $A\subset G$ is a finite subset satisfying $|A+A|\le K|A|$. Then there exist a finite subgroup $H$ and a progression $P$ of rank at most $r(K)$ such that $A$ is a subset of $H+P$ of density at least $\delta(K)$.
\end{theorem}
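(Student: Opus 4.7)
The plan is to follow the classical Freiman--Ruzsa strategy, incorporating the Green--Ruzsa modifications needed to handle torsion in a general abelian group. \textbf{Step 1: Sumset estimates.} I would first apply the Plünnecke--Ruzsa inequalities (derived from the Ruzsa triangle inequality $|X|\cdot|Y-Z|\le|X-Y|\cdot|X-Z|$) to deduce that $|nA-mA|\le K^{n+m}|A|$ for all $n,m\ge 0$; in particular, $|2A-2A|\le K^4|A|$ and $|8A|\le K^8|A|$. \textbf{Step 2: Freiman modelling.} Next, pass to a dense subset of $A$ that is Freiman $8$-isomorphic to a subset $\tilde A$ of a finite abelian group $\tilde G$ of order at most $C(K)|A|$. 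For $G=\Z$ this is the classical reduction modulo a suitably chosen prime; for general abelian $G$ one first reduces to a finitely generated subgroup and uses its decomposition $\Z^d\oplus T$ with $T$ finite, carefully arranging the model to mimic the torsion.

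\textbf{Step 3: Bogolyubov's lemma.} Inside $\tilde G$, a Fourier-analytic argument due to Bogolyubov shows that $2\tilde A-2\tilde A$ contains a Bohr set
\[
B(\Gamma,\rho)=\{g\in\tilde G:|\chi(g)-1|\le\rho\text{ for all }\chi\in\Gamma\},
\]
with $|\Gamma|$ and $1/\rho$ bounded by functions of $K$ alone. The bound on $|\Gamma|$ comes from Parseval applied to $\widehat{1_{\tilde A}}$ together with the density of $\tilde A$ in $\tilde G$. \textbf{Step 4: Geometry of numbers.} A Minkowski-type argument on the $|\Gamma|$-dimensional lattice dual to the characters in $\Gamma$ (with the torsion directions of $\tilde G$ producing the subgroup factor) yields a coset progression $H+P$ contained in $B(\Gamma,\rho)$ with rank of $P$ at most $|\Gamma|$ and $|H+P|\ge c(K)|\tilde G|\ge c'(K)|A|$. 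Pulling back along the Freiman isomorphism, one obtains a coset progression of the same rank sitting inside $2A-2A$, of comparable size to $|A|$.

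\textbf{Step 5: Ruzsa covering.} Since $|A+(H+P)|\le|5A-2A|\le K^7|A|\le C(K)|H+P|$, Ruzsa's covering lemma produces a set $X\subset A$ of cardinality at most $C(K)$ with $A\subset X+(H+P)-(H+P)=X+H+2P$. Finally, absorb $X$ into the progression by appending its $|X|\le C(K)$ elements as extra generators with length $1$: this gives a single coset progression $H+P^*$ of rank at most $|\Gamma|+C(K)\le r(K)$ containing $A$, with $|H+P^*|\le\delta(K)^{-1}|A|$.

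The main obstacle, compared with Freiman's original theorem for $G=\Z$, lies in the combined Steps 2 and 4: both the modelling step and the extraction of structure from the Bohr set must carefully track how torsion in $G$ (or in $\tilde G$) contributes, and it is precisely this torsion that forces the appearance of the subgroup $H$ alongside the progression $P$ in the conclusion. Step 3 (Bogolyubov) is the other technical heart of the argument and is where the \emph{bounded} rank $r(K)$ ultimately comes from.
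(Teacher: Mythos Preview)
The paper does not actually prove this theorem: it states that ``the proof of Theorem~\ref{thm:fgr} is too long to be included in this article'' and that it is ``largely Fourier analytic'', and then merely illustrates two of its ingredients---Pl\"unnecke's inequalities and Ruzsa's covering lemma---by proving the bounded-exponent special case (Proposition~\ref{prop:vector}). Your five-step outline is the standard Green--Ruzsa argument and is fully consistent with that description: your Steps~1 and~5 are precisely the two tools the paper singles out, and your Step~3 (Bogolyubov) is the Fourier-analytic core the paper alludes to, so there is nothing to compare beyond noting that your sketch supplies the details the paper deliberately omits.
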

The proof is largely Fourier analytic, and gives explicit bounds on $r(K)$ and $\delta(K)$. Optimising these bounds continues to be an area of active research.

\section{Pl\"unnecke's inequalities and Ruzsa's covering lemma}
The proof of Theorem \ref{thm:fgr} is too long to be included in this article, but we will illustrate two fundamental tools from the proof by considering the following special case.
\begin{prop}[Ruzsa]\label{prop:vector}
Let $m\in\N$, and let $G$ be an abelian group in which each element has order at most $m$ (such as $G=(\Z/m\Z)^n$ for some $n\in\N$). Suppose that $A$ is a finite symmetric subset of $G$ such that $|A+A|\le K|A|$. Then $A$ is a subset of density at least $1/(m^{K^4}K)$ in some finite subgroup of $G$.
\end{prop}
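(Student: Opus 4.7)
The plan is to exhibit an explicit subgroup of controlled size containing $A$ by combining the two tools named in the title of this section. First, Pl\"unnecke's inequality in its symmetric form controls the iterated sumsets: since $A = -A$, we have $|4A| = |2A - 2A| \le K^4|A|$. Second, I would apply Ruzsa's covering lemma to the pair $(3A, A)$, fed precisely this bound: $|3A + A| = |4A| \le K^4|A|$ yields a set $T \subset 3A$ with $|T| \le K^4$ such that
\[
3A \;\subset\; T + (A - A) \;=\; T + 2A,
\]
the last equality again by the symmetry of $A$. I would then set $H := \langle T \rangle$; because every element of $G$ has order at most $m$, the subgroup $H$ is a quotient of $(\Z/m\Z)^{|T|}$, so $|H| \le m^{K^4}$.

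The crux of the proof is the claim that $S := H + 2A$ is itself a finite subgroup of $G$. Identity and symmetry are automatic, since $0 \in A - A = 2A$ and both $H$ and $2A$ are symmetric. Closure under addition is equivalent to $H + 4A \subset H + 2A$, i.e.\ to $4A \subset H + 2A$, which I would verify by using the covering inclusion twice in succession:
\[
4A \;=\; 3A + A \;\subset\; (T + 2A) + A \;=\; T + 3A \;\subset\; T + T + 2A \;\subset\; H + 2A,
\]
where $T + T \subset H$ in the last step because $T \subset H$ and $H$ is a group.

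Once $S$ is known to be a subgroup the rest is book-keeping. From $0 \in 2A$ we get $A \subset 3A \subset T + 2A \subset S$, and $|S| \le |H| \cdot |2A| \le m^{K^4} \cdot K|A|$, which is exactly the claimed density lower bound $|A|/|S| \ge 1/(m^{K^4}K)$. The main obstacle I see is identifying the correct pair of sets to feed into the covering lemma: the choice $(3A, A)$ is what makes the output $3A \subset T + 2A$ interlock, via a second application, with the closure condition $4A \subset H + 2A$ needed to promote $H + 2A$ to a subgroup; naive choices such as $(A, A)$ or $(2A, 2A)$ yield either a trivial covering or a $T$ of worse size, and in either case do not mesh with the sumset identity $S + S = H + 4A$.
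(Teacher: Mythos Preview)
Your proof is correct and follows essentially the same route as the paper: Pl\"unnecke gives $|4A|\le K^4|A|$, the covering lemma then gives $3A\subset T+2A$ with $|T|\le K^4$, and one bounds $|\langle T\rangle+2A|\le m^{K^4}K|A|$. The only cosmetic difference is that the paper iterates the covering inclusion inductively to obtain $\langle A\rangle\subset\langle X\rangle+2A$, whereas you verify directly that $\langle T\rangle+2A$ is closed under addition; the resulting subgroup is $\langle A\rangle$ in either case.
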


The first tool we present is \emph{Pl\"unnecke's inequalities}, which were first proved by Pl\"unnecke, then rediscovered and generalised by Ruzsa, and finally proved much more simply by Petridis.
\begin{prop}[Pl\"unnecke--Ruzsa]\label{prop:plun}
Let $G$ be an abelian group and suppose that $A$ is a finite subset satisfying $|A+A|\le K|A|$. Then $|mA-nA|\le K^{m+n}|A|$ for every $m,n\in\N$.
\end{prop}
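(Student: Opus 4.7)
My plan is to follow Petridis's elegant approach, which reduces the whole theorem to a single inequality about a carefully chosen subset of $A$ and then combines it with the Ruzsa triangle inequality. The first step is to pick a nonempty $B \subseteq A$ minimising the ratio $K' := |A + B|/|B|$; since $A$ itself is a candidate, automatically $K' \le |A + A|/|A| \le K$. The usefulness of this choice is the minimality property it furnishes: for every nonempty $S \subseteq B$, $|A + S| \ge K'|S|$.

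The crux of the proof is the following \emph{Petridis lemma}: for every finite $C \subseteq G$, $|A + B + C| \le K'|B + C|$. I would argue by induction on $|C|$. The base case $|C|=1$ reads $|A + B + c| = K'|B + c|$ and is trivial. For the inductive step, write $C = C_0 \cup \{x\}$ with $x \notin C_0$, and define $S = \{b \in B : b + x \in B + C_0\}$. Two book-keeping identities are immediate: $|B + C| = |B + C_0| + |B \setminus S|$, and $A + S + x$ is contained in both $A + B + C_0$ and $A + B + x$. Hence by inclusion--exclusion,
\[
|A + B + C| = |A + B + C_0| + |A + B + x| - |(A + B + C_0) \cap (A + B + x)|.
\]
The inductive hypothesis bounds the first term by $K'|B + C_0|$, the base case gives $|A + B + x| = K'|B|$, and the minimality bound $|A + S| \ge K'|S|$ (valid when $S$ is nonempty; the case $S = \emptyset$ is even easier) gives a lower bound of $K'|S|$ on the intersection. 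Combining these yields $|A + B + C| \le K'(|B + C_0| + |B| - |S|) = K'|B + C|$. I expect this inductive step to be the main obstacle: the subtlety is that the minimality of $B$ naturally produces a \emph{lower} bound on $|A + S|$, and one has to see that this is in fact exactly what is needed, namely to bound the intersection from below inside the inclusion--exclusion.

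With the lemma in hand, an easy induction on $k$ yields $|kA + B| \le K'^k|B|$: apply the lemma with $C = (k-1)A$ and use the inductive hypothesis to bound $|B + (k-1)A|$. It remains to convert iterated sums into the difference $mA - nA$, for which I would invoke the Ruzsa triangle inequality $|X - Z| \cdot |Y| \le |X - Y| \cdot |Y - Z|$ (which follows from a one-line injection $(X - Z) \times Y \hookrightarrow (X - Y) \times (Y - Z)$). Choosing $X = mA$, $Y = -B$, $Z = nA$ transforms this into
\[
|mA - nA| \cdot |B| \le |mA + B| \cdot |B + nA| \le K'^{m+n}|B|^2,
\]
and hence $|mA - nA| \le K'^{m+n}|B| \le K^{m+n}|A|$, since $B \subseteq A$, which concludes the proof.
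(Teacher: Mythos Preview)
Your proof is correct and follows Petridis's argument faithfully: the choice of $B\subseteq A$ minimising $|A+B|/|B|$, the inductive lemma $|A+B+C|\le K'|B+C|$, the iteration to $|kA+B|\le K'^k|B|$, and the final appeal to the Ruzsa triangle inequality are all carried out accurately. One very minor point: in the inclusion--exclusion step you actually only need (and only have) the containment $A+S+x\subseteq(A+B+C_0)\cap(A+B+x)$, giving an \emph{inequality} rather than an equality for $|A+B+C|$; your subsequent estimates use only this inequality, so nothing is affected.

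As for comparison with the paper: the paper does not in fact supply a proof of this proposition. It merely records that the result was first proved by Pl\"unnecke, generalised by Ruzsa, and \emph{proved much more simply by Petridis}, and then moves on to use it as a black box in the proof of Proposition~\ref{prop:vector}. Your write-up is precisely the Petridis argument the paper alludes to, so it is entirely in keeping with the paper's intent.
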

We will soon see concretely the role that this result plays in the proof of Proposition \ref{prop:vector}, but before that let us give a brief heuristic discussion of why one might expect such a result to be useful. First, note that if $H$ is a subgroup then $mH=H$ for every $m\in\N$, a property that we use often without even thinking. Proposition \ref{prop:plun} says that a set of small doubling satisfies an approximate version of this property: if $A$ is a finite set satisfying $|A+A|\le K|A|$ then, for every $m\in\N$, on the one hand the set $mA$ is not much bigger than $A$, and on the other hand it is also of small doubling, in the sense that $|mA+mA|\le K^{2m}|A|\le K^{2m}|mA|$.

Another important tool featuring in the proof of Proposition \ref{prop:vector} is the so-called `covering lemma' of Ruzsa. We present a slightly simplified version of it here; see \cite[Lemma 5.1]{bgt} for a more general statement.
\begin{lemma}[Ruzsa]\label{lem:cover}
Suppose $A$ is a finite symmetric subset of a group $G$ such that $|A^4|\le K|A|$. Then there exists $X\subset G$ of size at most $K$ such that $A^3\subset XA^2$.
\end{lemma}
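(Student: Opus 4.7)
The plan is to construct $X$ by a standard greedy packing argument. First I would choose $X\subset A^3$ to be a maximal subset with the property that the translates $\{xA:x\in X\}$ are pairwise disjoint (such a maximal $X$ exists because $A^3$ is finite). The two things to check are then the size bound $|X|\le K$ and the covering property $A^3\subset XA^2$.

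For the size bound, observe that since $X\subset A^3$ and the sets $xA$ (for $x\in X$) are disjoint subsets of $A^3A=A^4$, we have
\[
|X|\,|A|\;=\;\Bigl|\bigsqcup_{x\in X}xA\Bigr|\;\le\;|A^4|\;\le\;K|A|,
\]
whence $|X|\le K$. This is where the hypothesis $|A^4|\le K|A|$ enters.

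For the covering property, let $y\in A^3$ be arbitrary. By maximality of $X$, the translate $yA$ cannot be disjoint from every $xA$ with $x\in X$ (otherwise one could adjoin $y$ to $X$), so there exist $x\in X$ and $a,a'\in A$ with $ya=xa'$. Then $y=xa'a^{-1}\in xAA^{-1}$, and because $A$ is symmetric we have $AA^{-1}=A^2$, hence $y\in xA^2\subset XA^2$. Thus $A^3\subset XA^2$, as required.

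The argument is essentially a one-step maximality trick, so I do not expect a real obstacle; the only subtle point is remembering to use symmetry of $A$ to identify $AA^{-1}$ with $A^2$ (without symmetry one would only obtain $A^3\subset XAA^{-1}$, which is why the lemma is stated for symmetric sets).
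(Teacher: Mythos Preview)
Your proof is correct and follows essentially the same approach as the paper: choose $X\subset A^3$ maximal with the translates $xA$ disjoint, bound $|X|$ via $XA\subset A^4$, and deduce the covering from maximality together with the symmetry of $A$. The only cosmetic difference is that the paper writes $z=xa_2a_1^{-1}\in XA^2$ directly, whereas you explicitly note $AA^{-1}=A^2$ using symmetry.
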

\begin{proof}
Let $X\subset A^3$ be maximal such that the subsets $xA$ with $x\in X$ are disjoint, noting that $|XA|=|X||A|$. Since $XA\subset A^4$, this implies that $|X||A|\le K|A|$, and hence that $|X|\le K$. Moreover, given $z\in A^3$ the maximality of $X$ implies that there exist $x\in X$ and $a_1,a_2\in A$ such that $za_1=xa_2$, and hence $z=xa_2a_1^{-1}\in XA^2$. In particular, $A^3\subset XA^2$ as required.
\end{proof}

\begin{proof}[Proof of Proposition \ref{prop:vector}]
Proposition \ref{prop:plun} implies that $|4A|\le K^4|A|$. Lemma \ref{lem:cover} therefore implies that there exists a set $X$ of size at most $K^4$ such that $3A\subset X+2A$. This implies by induction that $mA\subset(m-2)X+2A$ for every $m>3$. Writing $\langle B\rangle$ for the subgroup generated by a set $B$, we deduce in particular that $\langle A\rangle\subset\langle X\rangle+2A$, and hence that $|\langle A\rangle|\le|\langle X\rangle||2A|\le m^{K^4}K|A|$, as required.
\end{proof}

\section{Approximate groups}\label{sec:app.grp}

When $G$ is not abelian, Proposition \ref{prop:plun} no longer holds as stated. For example, if $G$ is the free product $H\ast\langle x\rangle$ with $x$ some element of infinite order, and if we take
\begin{equation}\label{eq:plun.exemple}
A=H\cup\{x\},
\end{equation}
then $A^2=H\cup xH\cup Hx\cup\{x^2\}$, hence in particular $|A^2|\le3|A|$. On the other hand, $A^3\supset HxH$ and $|HxH|=|H|^2$, so $|A^3|\ge\frac{1}{4}|A|^2$.

Nonetheless, it turns out that if we replace $|A+A|\le K|A|$ with a slightly stronger hypothesis then we can obtain a conclusion analogous to that of Proposition \ref{prop:plun}. In fact, there are at least two such possible ways in which to strengthen the condition of small doubling. The first is to replace it with the condition of small tripling: an argument of Ruzsa shows that if we assume $|A^3|\le K|A|$ instead of $|A^2|\le K|A|$ for a finite symmetric set $A$ then we may conclude that $|A^m|\le K^{m-2}|A|$ for every $m\in\N$. In other words, unlike small doubling, small tripling permits us to bound the sizes of all of the sets $A^4,A^5,\ldots$.

The second possibility is to replace the condition $|A^2|\le K|A|$ by a property that we have already encountered in both Lemma \ref{lem:cover} and  \eqref{eq:cov.B}: the existence of a set $X$ of bounded size such that $A^2\subset XA$, which easily implies that $A$ has small doubling. It is this condition that underpins the following definition of an approximate subgroup, which is due to Tao.
\begin{definition}\label{def:app.grp}
A subset $A$ of a group $G$ is a \emph{$K$-approximate (sub)group} if it is symmetric and contains the identity and there exists a set $X\subset G$ of size at most $K$ such that $A^2\subset XA$.
\end{definition}
It is easy to see by induction that a $K$-approximate group $A$ satisfies $A^m\subset X^{m-1}A$ for every $m\in\N$, so if $A$ is finite then $|A^m|\le K^{m-1}|A|$ and once again we have an analogue of Proposition \ref{prop:plun}.

In fact, these two conditions -- having small tripling and being an approximate group -- are essentially equivalent for finite sets. We have just noted that if $A$ is a finite $K$-approximate group then $|A^3|\le K^2|A|$, so $A$ has small tripling. Conversely, for a finite symmetric set $A$ satisfying $|A^3|\le K|A|$, the result of Ruzsa shows that $|A^4|\le K^2|A|$, and then Lemma \ref{lem:cover} implies that $A^2$ is a $K^4$-approximate subgroup (we have $A^3\subset XA^2$ by Lemma \ref{lem:cover}, and hence $A^4=A^3A\subset XA^3\subset X^2A^2$).

Note that one advantage of the notion of an approximate subgroup is that it can be applied without modification to arbitrary infinite subsets of groups, for which the notion of small tripling does not in general make sense. Indeed, infinite approximate groups have begun to be studied in certain contexts. However, at the time of writing the theory is far more advanced for finite approximate groups, and we will concentrate on them for the remainder of this article.

When introducing the definition of approximate groups, Tao showed that the study of sets of small doubling essentially reduces to the study of finite approximate groups. First, note that in example \eqref{eq:plun.exemple}, $A$ possesses a large subset that is a $1$-approximate subgroup, namely $H$. Tao showed that this is a general phenomenon, in the sense that there exists $C>0$ such that given any set $|A^2|\le K|A|$ there exists a $K^C$-approximate group $B\subset G$ of size at most $K^C|A|$ such that $A$ is contained in a union of at most $K^C$ left translates of $B$. One may thus replace the hypothesis $|A^2|\le K|A|$ by the hypothesis of being a $K$-approximate subgroup without really losing any generality, whilst gaining the ability to control the sizes of the sets $A^4,A^5,\ldots$.

We close this section by noting that Ruzsa proved Lemma \ref{lem:cover} several years before the introduction of Definition \ref{def:app.grp} by Tao. In that sense, Ruzsa's work can be thought of as a precursor to the notion of approximate group.

\section{Basic properties}
Here are two simple but useful properties of a subgroup $H$ of $G$:
\begin{enumerate}
\item If $\pi:G\to Z$ is a homomorphism then $\pi(H)$ is again a subgroup of $Z$.
\item If $N<G$ is another subgroup then $H\cap N$ is also a subgroup.
\end{enumerate}
It turns out that these properties have approximate analogues for approximate groups and sets of small tripling. For (1), if $A$ is a $K$-approximate subgroup of $G$ and $\pi:G\to Z$ is a homomorphism then it is trivially the case that $\pi(A)$ is a $K$-approximate subgroup of $Z$. Less obviously, an argument of Helfgott shows that if $A$ is a finite symmetric subset of $G$ then
\[
\frac{|\pi(A)^m|}{|\pi(A)|}\le\frac{|A^{m+2}|}{|A|},
\]
so in particular if $|A^3|\le K|A|$ then $|\pi(A)^3|\le K^3|\pi(A)|$. For (2), one can show for example that $A$ and $B$ are finite symmetric subsets of $G$ then
\[
\frac{|A^m\cap B^n|}{|A^2\cap B^2|}\le\frac{|A^{m+1}|}{|A|}\frac{|B^{n+1}|}{|B|}
\]
for every $m,n\ge2$, and in particular if $|A^3|\le K|A|$ and $|B^3|\le L|B|$ then $|(A^2\cap B^2)^3|\le(KL)^{5}|A^2\cap B^2|$. Similarly, if $A$ is a $K$-approximate group and $B$ is an $L$-approximate group then $A^2\cap B^2$ is a $(KL)^3$-approximate group. See \cite[\S2.6]{book} for proofs and generalisations of these assertions.

We saw in the previous section that approximate groups and sets of small tripling are essentially equivalent notions. In this section we have seen that they satisfy the same basic properties, which renders them interchangeable in a number of arguments.

\section{Approximate subgroups of non-abelian groups}
One can generalise the concept of progression to certain non-abelian groups. For example, consider the \emph{Heisenberg group} $H$ defined by
\[
H=
\left(\begin{array}{ccc}
1 & \Z & \Z \\
0 & 1   & \Z \\
0 & 0   &  1
\end{array}\right)=
\left\{\left(\begin{array}{ccc}
1 & n_2   & n_3 \\
0 & 1         & n_1 \\
0 & 0         & 1
\end{array}\right):
n_i\in\Z\right\},
\]
and set
\[
Q=\left\{\left(\begin{array}{ccc}
1 & \ell_1   & \ell_3 \\
0 & 1         & \ell_2 \\
0 & 0         & 1
\end{array}\right):
\begin{array}{c}
|\ell_1|\le L_1,
|\ell_2|\le L_1,
|\ell_3|\le L_1L_2
\end{array}
\right\}.
\]
It is an easy exercise to check that
\[
Q^3\subset\left\{\left(\begin{array}{ccc}
1 & \ell_1   & \ell_3 \\
0 & 1         & \ell_2 \\
0 & 0         & 1
\end{array}\right):
\begin{array}{c}
|\ell_1|\le3L_1,
|\ell_2|\le3L_1,
|\ell_3|\le8L_1L_2
\end{array}
\right\},
\]
and hence that $|Q^3|\le72|Q|$ regardless of the values of $L_1,L_2$.

The key property of $H$ that makes this true is that it is \emph{nilpotent}. To define this, first define the \emph{lower central series} of a group $G$ to be the decreasing sequence of normal subgroups $G=G_1>G_2>\cdots$ defined recursively by setting $G_1=G$ and $G_{n+1}=[G,G_n]$. A group $G$ is then said to be \emph{nilpotent} if there exists $s$ such that $G_{s+1}=\{1\}$. The smallest $s$ for which this holds is said to be the \emph{step} or \emph{class} of $G$. For the Heisenberg group $H$ we have
\[
H_2=\left(\begin{array}{ccc}
1 & 0 & \Z \\
0 & 1   & 0 \\
0 & 0   &  1
\end{array}\right)
\qquad\text{and}\qquad
H_3=\{1\},
\]
so that $H$ is $2$-step nilpotent.

It turns out that we can define a progression in the same way as we did in the Heisenberg group in an arbitrary nilpotent group, as follows.
\begin{definition}
Let $G$ be an $s$-step nilpotent group, let $x_1,\ldots,x_r\in G$, and let $L_1,\ldots,L_r\in\N$. Then we define $P(x;L)\subset G$ to be the set of those elements of $G$ expressible as products of the elements $x_i^{\pm1}$ in which each $x_i$ and its inverse appear at most $L_i$ times between them. We call $P(x;L)$ a \emph{nilprogression} of \emph{rank} $r$ and \emph{step} $s$.
\end{definition}
One can show that if the $L_i$ are large enough in terms of $r$ and $s$ then the nilprogression $P(x;L)$ is a $K$-approximate group of some $K$ depending only on $r$ and $s$.

The `progression' $Q$ is not exactly a nilprogression, but one can check that if we set
\[
x_1=
\left(\begin{array}{ccc}
1 & 0 & 0\\
0 & 1 & 1\\
0 & 0 & 1
\end{array}\right),
\qquad
x_2=
\left(\begin{array}{ccc}
1 & 1 & 0 \\
0 & 1 & 0 \\
0 & 0 & 1
\end{array}\right)
\]
then $P(x;L)\subset Q\subset P(x;5L)$, so $Q$ is roughly equivalent to a nilprogression in some sense. See \cite[Definition 5.6.2]{book} for a generalisation of $Q$ to arbitrary nilpotent groups, and \cite[Proposition 5.6.4]{book} for further details on this rough equivalence.

The following remarkable result of Breuillard, Green and Tao shows that nilprogressions are essentially the most general examples of sets of small doubling.
\begin{theorem}[Breuillard--Green--Tao {\cite[Theorem 2.12]{bgt}}]\label{thm:bgt}
Let $G$ be an arbitrary group and $A\subset G$ a finite subset such that $|A^2|\le K|A|$. Then $G$ contains a subset $P$ containing a finite subgroup $H$ normalised by $P$, such that the image of $P$ in $\langle P\rangle/H$ is a nilprogression of rank at most $r(K)$ and step at most $s(K)$, and such that $|P|\le t(K)|A|$. There also exists a set $X$ of size at most $i(K)$ such that $A\subset XP$.
\end{theorem}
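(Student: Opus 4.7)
The plan is to reduce from the small-doubling hypothesis to the approximate-subgroup setting, and then to invoke deep infinite-dimensional machinery. First, by Tao's observation recalled at the end of Section \ref{sec:app.grp}, there is an absolute constant $C$ such that $A$ is covered by at most $K^C$ left translates of a $K^C$-approximate group $B$ with $|B|\le K^C|A|$. It therefore suffices to prove the structural conclusion for a general finite $K'$-approximate group: namely, that it contains a coset nilprogression of rank and step bounded in terms of $K'$ alone and of comparable cardinality. Ruzsa-type covering arguments then transfer this back to the original $A$ with $i(K)$ and $t(K)$ controlled.

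The heart of the argument, due to Breuillard, Green and Tao, is an ultraproduct construction. Assuming the statement fails, take a sequence of finite $K$-approximate groups $B_n$ admitting no coset nilprogression with parameters bounded by $n$. Forming a non-principal ultraproduct yields an infinite $K$-approximate group $\mathbf{B}$ inside a non-standard group, equipped with a Loeb measure under which $\mathbf{B}$ has finite positive measure. The key input is Hrushovski's Lie model theorem: there is a group homomorphism $\phi$ from the subgroup generated by $\mathbf{B}$ onto a locally compact group $L$ such that $\phi(\mathbf{B}^4)$ is precompact in $L$ and such that $\phi^{-1}(U)\subset\mathbf{B}^4$ for some open neighbourhood $U$ of the identity. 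This converts the combinatorial hypothesis of small doubling into honest topological and measure-theoretic data.

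One then analyses $L$ via Gleason--Yamabe (the solution to Hilbert's fifth problem): after passing to an open subgroup and quotienting by a compact normal subgroup, one reduces to the case in which $L$ is a connected Lie group. Using the escape norm on $L$ together with the exponential map and the Baker--Campbell--Hausdorff formula, one shows that the Lie subgroup generated by a sufficiently small neighbourhood of the identity must be nilpotent of bounded dimension and step -- this is essentially the approximate-group analogue of Jordan's theorem. Consequently, $\phi(\mathbf{B})$ sits, up to a coset, inside a connected nilpotent Lie subgroup of bounded dimension and step. One can then read off elements and integer parameters in $\mathbf{B}$ that assemble, modulo a compact normal subgroup, into a coset nilprogression with the required properties. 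Standardness of first-order statements under the ultraproduct transports this structure back to the individual $B_n$, contradicting their choice.

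The principal obstacle is the final descent from the Lie-theoretic structure inside $L$ to an honest nilprogression at the combinatorial level. Selecting generators $x_1,\ldots,x_r\in\mathbf{B}$ and integer lengths $L_1,\ldots,L_r$ whose formal product set fits inside $\mathbf{B}$ while occupying a positive proportion of it requires a delicate escape-norm construction, performed inductively along the lower central series and controlled step by step via Baker--Campbell--Hausdorff and commutator identities. Because the entire scheme runs through a compactness argument, the quantities $r(K)$, $s(K)$, $t(K)$ and $i(K)$ are produced only qualitatively; making them effective remains a substantial open problem.
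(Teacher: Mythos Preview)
Your outline is essentially correct and matches the approach the paper alludes to. Note, however, that the paper does not actually prove this theorem: it merely cites \cite[Theorem 2.12]{bgt} and remarks in one sentence that the proof ``uses tools from model theory introduced by Hrushovski, and arguments essentially due to Gleason arising from the solution to Hilbert's fifth problem.'' Your sketch---reduce to approximate groups via Tao's covering argument, pass to an ultraproduct, apply Hrushovski's Lie model theorem, invoke Gleason--Yamabe to obtain a Lie quotient, extract nilpotent structure via escape norms, and transfer back by \L o\'s---is exactly the Breuillard--Green--Tao strategy and is fully consistent with the paper's description, including the remark that the ultrafilter argument yields no explicit bound on $i(K)$.
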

In addition to the general theory of approximate groups, the proof of Theorem \ref{thm:bgt} uses tools from model theory introduced by Hrushovski, and arguments essentially due to Gleason arising from the solution to Hilbert's fifth problem in the 1950s.

The use of an ultrafilter in the model-theoretic arguments means that the proof of Theorem \ref{thm:bgt} gives no explicit bound on $i(K)$. For some applications of approximate groups, notably those to \emph{growth} of groups that we present in Section \ref{sec:apps}, this does not pose a major problem. However, there are also applications of approximate groups, such as to \emph{expansion}, in which it is important to have more explicit results than Theorem \ref{thm:bgt}. Partly for this reason, numerous authors have given proofs of Theorem \ref{thm:bgt} that offer explicit bounds on $i(K)$ in return for restricting attention to certain specific classes of groups. There are such results, for example, in the case of soluble groups, residually nilpotent groups, and certain linear groups. In the next section we will discuss briefly how some of these results for linear groups are used in the construction of expanders.

\section{Applications to growth and expansion in groups}\label{sec:apps}

In this section we describe two of the most spectacular applications of approximate groups. We begin with applications to \emph{growth} of finitely generated groups, a notion that is in turn linked to random walks, geometric group theory and differential geometry. After that we will discuss applications to \emph{expansion}, a notion which appears in several branches of mathematics and has numerous applications, particularly in theoretical computer science.

Let $G$ be a finitely generated group and $S$ a finite symmetric generating subset. The \emph{growth} of $G$ refers to the speed with which the cardinality of the sets $S^1,S^2,\ldots$ grows. It is not difficult to show that if $G$ is \emph{virtually nilpotent}---that is to say, if $G$ contains a nilpotent subgroup of finite index---then there exist $C,d\ge0$ such that $|S^n|\le Cn^d$ for every $n\in\N$. In that case we say that $G$ has \emph{polynomial growth}. A fundamental theorem of Gromov says that the converse also holds: every finitely generated group of polynomial growth is virtually nilpotent.

It turns out that approximate groups can be used to prove Gromov's theorem. In fact, Breuillard, Green and Tao used Theorem \ref{thm:bgt} to prove a refined version of Gromov's theorem. For example, the quantitative statement of Gromov's theorem implicitly requires the generating set to be of bounded cardinality, but in the Breuillard--Green--Tao version this hypothesis is not necessary.

The observation that allows one to reduce Gromov's theorem to Theorem \ref{thm:bgt} is that the condition
\begin{equation}\label{eq:poly.growth+}
|S^n|\le n^d|S|
\end{equation}
implies that there exists $K\ge1$ depending only on $d$, and an integer $m$ satisfying $\sqrt{n}\leq m\leq n$, such that $|S^{2m}|\le K|S^m|$. In other words, \eqref{eq:poly.growth+} implies that there exists $m$ not too small such that $S^m$ is a set of small doubling. Thus, approximate groups appear very naturally in the study of groups of polynomial growth. We refer the reader to \cite[Chapter 11]{book} and the references therein for more details and further applications in this direction.

Another important application of approximate groups is the construction of \emph{expander graphs}. An \emph{expander graph} is a graph that is both sparse and highly connected. Precisely, given a subset $A$ of a finite graph $\Gamma$, we define the \emph{boundary} $\partial A$ of $A$ by setting $\partial A=\{x\in\Gamma\setminus A:(\exists a\in A)(x\sim a)\}$, and we define the \emph{(vertex) Cheeger constant} $h(\Gamma)$ of $\Gamma$ by setting
\[
h(\Gamma)=\min_{|A|\le|\Gamma|/2}\frac{|\partial A|}{|A|}.
\]
Given $\eps>0$ and $d\in\N$, a family $X$ of finite graphs is said to be a \emph{family of $(\eps,d)$-expanders} if $h(\Gamma)\ge\eps$ for every $\Gamma\in X$, if $\sup_{\Gamma\in X}|\Gamma|=\infty$, and if each vertex of each graph in $X$ has degree at most $d$. Note that if a finite graph $\Gamma$ is complete then $h(\Gamma)\ge1$; the upper bound on the degrees rules out this trivial situation, and is the sense in which expanders are sparse.

To see why such graphs are interesting, note that sparsity and high connectivity are both desirable properties of communication and transport networks, yet are intuitively difficult to achieve simultaneously.

One of the objectives, and one of the difficulties, in the theory of expander graphs is their construction. One fruitful approach is based on group theory and the notion of a \emph{Cayley graph}. Given a finitely generated group $G$ and a finite symmetric generating set $S$, the \emph{Cayley graph} $\Gamma(G,S)$ has the elements of $G$ as its vertices, and has $x$ and $y$ joined by an edge if there exists $s\in S$ such that $xs=y$.

It turns out that certain results using techniques from the theory of approximate groups can be applied in the construction of expander Cayley graphs. For example, for $SL_n(\K)$ we have the following theorem, which was announced independently (within four hours of one another!) by Breuillard--Green--Tao and Pyber--Szabo, Helfgott having already treated the cases $d=2,3$ for $\K=\F_p$ with $p$ prime.
\begin{theorem}[{\cite[Theorem 1.5.1]{tao.expansion}}]\label{thm:bgt/ps}
Let $\K$ be a finite field and let $n\ge2$. Let $A$ be a generating set of $SL_n(\K)$. Suppose that $\eps>0$ is small enough in terms of $n$. Then either $|A^3|\ge|A|^{1+\eps}$, or $|A|\ge|SL_n(\K)|^{1-c_n\eps}$, with $c_n$ a certain constant depending only on $n$.
\end{theorem}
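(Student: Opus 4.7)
The plan is to argue by contrapositive: assume $|A^3|<|A|^{1+\eps}$ and deduce $|A|\ge|SL_n(\K)|^{1-c_n\eps}$. The first step is to pass from small tripling to an approximate subgroup. Because $|A^3|\le|A|^{1+\eps}$, the Ruzsa tripling argument together with Lemma \ref{lem:cover} produces an $|A|^{O(\eps)}$-approximate subgroup $H$, contained in a bounded power of $A\cup A^{-1}$, of size at least $|A|^{1-O(\eps)}$, and generating the same group as $A$, namely $SL_n(\K)$. It therefore suffices to show that any $K$-approximate subgroup $H$ of $SL_n(\K)$ that generates $SL_n(\K)$ satisfies either $K\ge|H|^{c_n}$ or $|H|\ge|SL_n(\K)|/K^{C_n}$.

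The main algebraic input is the Larsen--Pink inequality: viewing $SL_n$ as an algebraic group over $\bar\K$, for every proper closed subvariety $V\subsetneq SL_n$ whose defining equations have degree bounded in terms of $n$ and every $g\in SL_n(\bar\K)$,
\[
|H\cap gV|\le C_n\,(|H^3|/|H|)^{C_n}\,|H|^{(\dim V)/(n^2-1)},
\]
provided $H$ is not trapped in a proper closed algebraic subgroup of $SL_n$ of bounded complexity. This provision is ensured by an escape-from-subvarieties argument of Eskin--Mozes--Oh type: since $H$ generates $SL_n(\K)$, no proper algebraic subgroup can absorb $H$, and bounded products of elements of $H$ escape every prescribed proper subvariety.

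These two ingredients are combined through the conjugation action of $H$ on itself and related product maps. Escape from subvarieties produces a regular semisimple element $h\in H^{O_n(1)}$; its centraliser is a maximal torus $T$ of dimension $n-1$ cut out by low-degree equations, so Larsen--Pink bounds $|H\cap gT|$ by $K^{O_n(1)}|H|^{1/(n+1)}$, whence the conjugation orbit of $h$ contributes at least $|H|^{n/(n+1)}/K^{O_n(1)}$ distinct elements to $H^3$. Iterating and amplifying this kind of estimate over subvarieties of varying dimensions---essentially a sum-product-style bootstrap internal to $SL_n$---one obtains $|H^3|/|H|\ge|H|^{c_n}$ unless $|H|$ is within a factor $K^{O_n(1)}$ of $|SL_n(\K)|$. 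Translating back via $K=|A|^{O(\eps)}$ and $|H|\ge|A|^{1-O(\eps)}$ yields the desired dichotomy.

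The main obstacle is two-fold: first, proving the Larsen--Pink inequality with explicit polynomial dependence on the tripling constant and uniformly in the finite field $\K$; second, carrying out the amplification so that the exponent $c_n$ in $|H|^{c_n}$ is genuinely positive (the single-orbit calculation above gives only $|H|^{n/(n+1)}$, which is sublinear in $|H|$). Both steps rest on an induction over $\dim V$ adapted to the root system of $SL_n$, together with a case analysis of how $H$ can lie near tori, unipotent radicals, and parabolics; excluding entrapment in exceptional subgroups ultimately invokes a classification of maximal closed subgroups of $SL_n$ in the spirit of Aschbacher's theorem.
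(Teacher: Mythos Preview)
The paper does not prove this theorem. It is quoted as a black box, attributed to Breuillard--Green--Tao and Pyber--Szabo (with Helfgott for $n=2,3$), cited to \cite{tao.expansion}, and then used only as input to the Bourgain--Gamburd machine for constructing expanders. There is therefore no ``paper's own proof'' against which to compare your proposal.

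For what it is worth, your outline is a fair high-level sketch of the argument one actually finds in the cited literature: reduce to approximate groups via Ruzsa/covering, invoke a Larsen--Pink-type inequality, use escape from subvarieties to locate a regular semisimple element, and bootstrap via the conjugation action. Your closing paragraph is honest about where the real work lies. One small caution: when passing from $A$ to the approximate subgroup $H$ you assert that $H$ still generates $SL_n(\K)$; this is not automatic from the covering lemma alone and in practice one argues with $A$ (or $A\cup A^{-1}\cup\{1\}$) itself, whose symmetrisation already has small tripling, rather than replacing it by an auxiliary $H$. But none of this bears on the comparison requested, since the present paper offers no proof to compare with.
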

It turns out that using Theorem \ref{thm:bgt/ps} and an ingenious argument of Bourgain and Gamburd one can show that certain Cayley graphs of $SL_n(\F_p)$ are expander graphs. For further details on this argument and its history the reader can consult Tao's book \cite{tao.expansion}.

\end{document}